\documentclass[]{siamart251216}

\usepackage[siamart]{lindquist}

\usepackage[nocompress]{cite}
\usepackage{booktabs}
\usepackage{subcaption}
\usepackage[figuresleft]{rotating}

\usepackage{algorithmicx}
\usepackage{algpseudocode}

\usepackage{pgfplots}
\pgfplotsset{compat=1.18}

\newcommand{\matrixname}[1]{\texttt{#1}}

\definecolor{color_double}{RGB}{51,34,136} 
\definecolor{color_single}{RGB}{170,68,153} 
\definecolor{color_half}{RGB}{68,170,153} 
\definecolor{color_bfloat16}{RGB}{136,204,238} 

\title{A Mixed-Precision Model\\for Matrix-Splitting Methods%
}
\author{Neil Lindquist\thanks{NVIDIA, CA, USA 95051 (\email{nlindquist@nvidia.com}, \email{dappelhans@nvidia.com)}} \and David Appelhans\footnotemark[1]%
\and Charles W. Jackson\thanks{NASA Langley Research Center, VA USA, 23681 (\email{charles.w.jackson@nasa.gov}, \email{joseph.m.derlaga@nasa.gov})} \and Joseph M. Derlaga\footnotemark[2]}

\headers{A Mixed-Precision Model for Matrix-Splitting Methods}{Lindquist, Appelhans, Jackson and Derlaga}

\begin{document}

\maketitle

\begin{abstract}
	The performance of stationary iterative methods is memory-bound, due to the low arithmetic intensity.
	So, we propose a mixed-precision technique to reduce the amount of data movement in matrix-splitting iterative methods.
	We demonstrate both theoretically and experimentally that this technique will generally converge at a similar rate as an exact-precision iteration, depending on the precision used and the conditioning of the \(M\) matrix.
	Furthermore, we demonstrate that iterative refinement can be used to refine the solution to full double-precision accuracy.
	Using this approach, we mixed half and double precision to achieve an average speedup over uniform double precision of \(1.60\times\) for scalar and block Jacobi and \(1.25\times\) for scalar and block Gauss-Seidel.
	Furthermore, we reduced the precision in the SSOR iteration used by OVERFLOW, a NASA code for compressible fluid dynamics simulations, and achieved a \(1.14\times\) speedup to the overall application.
\end{abstract}
\begin{keywords}
	Mixed precision, reduced-precision arithmetic, Jacobi iterations, Gauss-Seidel iterations
\end{keywords}
\begin{MSCcodes}
	65F10, 65G50, 65Y04, 65Y20
\end{MSCcodes}

\section{Introduction}

Stationary iterative methods are used in many contexts to solve linear systems of equations, particularly for compressible fluid dynamics simulations~\cite{jackson2024gpu}.
However, such methods are memory-bound, due to the low arithmetic intensity.
Thus, selectively storing parts of the matrix in reduced precision will improve performance.
Specifically, we focus on solving the linear system \(Ax=b\) with the iteration
\[
	x_{i+1} = M^{-1}(b - Nx)
\]
for a splitting \(A = M + N\) for which portions of the matrix with small magnitude can be stored in reduced precision.
We especially consider storing the off-diagonal blocks in low precision since stationary iterations are most commonly used when \(N\) is smaller than \(M\).

We specifically focus on matrix-splitting methods.
Note that ``multi-stage'' stationary iterations like symmetric successive over-relaxation (SSOR) or the alternating-direction implicit method (ADI) can be formulated as a series of matrix splittings which are used on alternating iterations.
Additionally, any stationary iteration for linear equations with linear convergence induces a corresponding matrix splitting~\cite[Lemma 2.3]{lanzkron1990convergence}, although such a splitting may lack the sparsity of the original problem.
So, we only consider matrix-splitting iterations without further comment.

\subsection{Related Work}
There is a wide breadth of previous work on using mixed precision in numerical linear algebra~\cite{abdelfattah2021survey,higham2022mixed}.
Such work either stores the data in smaller formats to reduce data movement (like our work) or uses reduced precision for the computation to take advantage of the more numerous low-precision arithmetic units, depending on the arithmetic intensity of the application.
We focus on four floating-point formats that are primarily discussed for mixed precision in scientific computing, three defined in the IEEE 754-2019 specification (double, single, and half) and one originally developed by Google (BFloat16).
\Cref{tab:fp-specs} outlines the properties of these formats.
\begin{table}
	\caption{Properties of select floating-point formats.}
	\label{tab:fp-specs}
	\centering
	\begin{tabular}{rcc}
		Format & Maximum Value & Unit round-off* \\
		\midrule
		IEEE double & \(1.80\times 10^{308}\) & \(2^{-53} \approx 1.1\times 10^{-16}\) \\
		IEEE single & \(3.40\times 10^{38}\) & \(2^{-24} \approx 6.0\times 10^{-8}\) \\
		IEEE half & \(6.55 \times 10^{4}\) & \(2^{-11} \approx 4.9\times 10^{-4}\) \\
		BFloat16 & \(3.40\times 10^{38}\) & \(2^{-8} \approx 3.9\times 10^{-3}\) \\
	\end{tabular}
	
	* The maximum error for rounding to the nearest floating-point value.
\end{table}

Quite similar to our work, Walden et al.\ have previously considered storing off-diagonal blocks in low precision for the Fun3D compressible CFD code~\cite{walden2019mixed}.
Specifically, they store the diagonal blocks and right-hand side in double precision, the computed solution in single-precision, and the off-diagonal blocks in either single- or half-precision.
Additionally, they developed an approach to convert the data to low precision without requiring a second storage array.
However, their explorations were limited to the linear systems arising in an unstructured supersonic fluid simulation.
Furthermore, they did not provide any theoretical analysis of the error or consider the use of iterative refinement.

Anzt et al.\ have explored a mixed-precision Jacobi iteration where a different precision is used for each row~\cite{anzt2015adaptive,grutzmacher2020customized}.
Their approach is based on determining what precision to use for each row as the iteration progresses.
This allows the solver to adapt the precision both temporally and row-wise.
However, this approach leaves parts of the matrix in high-precision and introduces additional overheads for dispatching between precisions.
Originally, the technique was proposed for FPGAs~\cite{anzt2015adaptive}, but the development of the split-mantissa storage format made it applicable for general-purpose hardware~\cite{grutzmacher2020customized}.
A related work uses different precisions for the diagonal blocks of a block-Jacobi preconditioner~\cite{flegar2021adaptive}.

Another related work is the mixed-precision work of Ge and Zhang for the general alternating-direction implicit framework~\cite{ge2025three}.
They found that using an alternating-direction sweep as the inner solver of iterative refinement allows a full-precision solution to converge, even with IEEE half-precision.
However, they found it necessary to introduce a regularization term that reduced the size of the solution updates; this regularization required tuning for the specific matrix.
Additionally, the regularization worked by reducing the size of the solution update, which can increase the number of iterations by requiring the same error-direction to be corrected multiple times.
Finally, they recompute the residual every iteration, reducing the potential speedup.

The work of Anzt et al.\ also emphasizes decoupling the storage format from the data type used to do the computation in the so-called Memory Accessor paradigm~\cite{anzt2015adaptive,grutzmacher2020customized}.
For memory-bound workloads, the compute resources are otherwise largely idle, so extra computation to decode the storage format or doing higher-precision arithmetic will not affect performance.
Furthermore, decoupling the storage and compute types allows the use of formats other than IEEE floating-point, which can better exploit the application's specific data.
This has recently been expanded by Amestoy et al.\ into a memory accessor that decodes an entire block, keeping the compute datatype in the L1 cache~\cite{amestoy2025blas}.

Finally, similar to our results, previous work has shown that reducing the precision of a variety of iterative methods usually has little effect on the rate of convergence for sufficiently well-behaved problems~\cite{lindquist2020improving,lindquist2022accelerating,kelly2022newtons,mccormick2021algebraic}.
(The meaning of ``well-behaved'' naturally varies with the method in question, but usually means sufficiently well conditioned.)
However, reduced precision almost always reduces the maximum achievable accuracy;
in such cases, high-precision iterative refinement can often be used to produce a full-precision solution.
Generally, this can be understood by noting that during the initial iterations, the mathematical errors (i.e., those present in exact arithmetic) are several orders of magnitude larger than the round-off errors, and the latter is a minuscule percent of the total error and unnoticeable.\footnote{This is similar to Amdahl's law for improving software performance.}
Then, as the mathematical error is reduced to a similar or smaller magnitude as the round-off errors, the round-off error is no longer a negligible term and soon dominates, causing the accuracy to stagnate at the working-precision accuracy.

\section{Theoretical Analysis}
\label{sec:theory}

To understand this approach, we bound the normwise forward error of a generic stationary iteration.
In short, the theorem shows that the mixed-precision approach will converge almost as fast as an exact iteration until it reaches a relative forward error determined by the storage and computation accuracy.
Note that while reducing the precision of off-diagonal blocks is our focus, the theorem applies to a fairly general class of mixed-precision methods.

\begin{theorem}
	\label{thm:main-result}
	Let \(A\) be decomposed as \(M + N\), and let \(\widehat{M} = M - E_M\) and \(\widehat{N} = N - E_N\) be the values that are actually stored.
	Assume \(\norm{\infty}{E_M} \leq \epsilon_M \norm{\infty}{M}\) and \(\norm{\infty}{E_N} \leq \epsilon_N \norm{\infty}{N}\).
	Assume \(\norm{\infty}{M^{-1}E_M} \leq 0.5\).
	Let \(n_z \geq 1\) be the largest number of nonzeros in any row of \(N\).
	Suppose that all computation uses floating point arithmetic with a unit round-off of \(\epsilon_c\) and that the solution to \(\widehat{M}x_{i+1} = f_i\) is computed so that
	\[
	(\widehat{M} + E_{S})\widetilde{x}_{i+1} = \widetilde{f}_i
	\]
	where \(\norm{\infty}{E_S} \leq c_n\epsilon_c\norm{\infty}{\widehat{M}}\).
	Finally, assume that \(\norm{\infty}{M^{-1}N} < 1\), i.e., the iterations converge in exact arithmetic.
	Then,
	\begin{align*}
		\norm{\infty}{\widetilde{e}_{i+1}}
		\leq&
		(1+2\norm{\infty}{M^{-1}E_M})\norm{\infty}{M^{-1}N}\norm{\infty}{\widetilde{e}_{i}} \\
		&+ \norm{\infty}{\widehat{M}^{-1}}\norm{\infty}{N}((\epsilon_N+\gamma_{n_z})\norm{\infty}{\widetilde{x}_i} + \epsilon_c\norm{\infty}{x}) \\
		&+ (\epsilon_M + \epsilon_c)\cond{\infty}{\widehat{M}}\norm{\infty}{x} \\
		&+ c_n\epsilon_c\cond{\infty}{\widehat{M}}\norm{\infty}{\widetilde{x}_{i+1}}
	\end{align*}
	where \(\gamma_{n_z} = \epsilon_cn_z / (1 - \epsilon_cn_z)\)~\cite[Chap.~3]{higham2002accuracy}.
	Furthermore, if we have \(\norm{\infty}{\widetilde{x}_i} \lesssim \norm{\infty}{x}\), \({\norm{\infty}{\widetilde{x}_{i+1}} \lesssim \norm{\infty}{x}}\), \(\epsilon_N \leq 0.1\), and \(n_z\epsilon_c \ll 1\), then
	\begin{align*}
		\norm{\infty}{\widetilde{e}_{i+1}}
		\lesssim&
		(1+2\norm{\infty}{M^{-1}E_M})\norm{\infty}{M^{-1}N}\norm{\infty}{\widetilde{e}_{i}} \\
		&+ (\epsilon_N + 2.1n_z\epsilon_c)\norm{\infty}{\widehat{M}^{-1}}\norm{\infty}{N}\norm{\infty}{x}\\
		&+ (\epsilon_M + (c_n+1)\epsilon_c)\cond{\infty}{\widehat{M}}\norm{\infty}{x}.
	\end{align*}
\end{theorem}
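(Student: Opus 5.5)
We model one floating-point iteration exactly and then compare it with the exact-arithmetic iteration \(x = M^{-1}(b - Nx)\). Let \(\widetilde{e}_i = \widetilde{x}_i - x\).

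\textbf{Modelling the computed right-hand side.} The computed right-hand side is \(\widetilde{f}_i = \mathrm{fl}(b - \widehat{N}\widetilde{x}_i)\). By the standard inner-product analysis \cite[Chap.~3]{higham2002accuracy}, this satisfies
\[
\widetilde{f}_i = b - \widehat{N}\widetilde{x}_i + \delta_i,
\]
where \(\norm{\infty}{\delta_i} \leq \gamma_{n_z}\norm{\infty}{\widehat{N}}\norm{\infty}{\widetilde{x}_i} + \epsilon_c\norm{\infty}{b}\). Here each row of \(\widehat{N}\widetilde{x}_i\) has at most \(n_z\) products, and the subtraction from \(b\) costs one rounding. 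We then substitute \(b = Mx + Nx\) and \(\widehat{N} = N - E_N\). This gives \(\norm{\infty}{b}\) bounded through \(\norm{\infty}{M}\norm{\infty}{x} + \norm{\infty}{N}\norm{\infty}{x}\), and the \(E_N\widetilde{x}_i\) piece contributes \(\epsilon_N\norm{\infty}{N}\norm{\infty}{\widetilde{x}_i}\). Using \(\norm{\infty}{\widehat{N}} \leq (1+\epsilon_N)\norm{\infty}{N}\), any cross term \(\epsilon_N\gamma_{n_z}\) is absorbed (or folded into the constant in the second part). The \(\epsilon_c\norm{\infty}{M}\norm{\infty}{x}\) part is kept separate: multiplied by \(\norm{\infty}{\widehat{M}^{-1}}\), it becomes the \(\epsilon_c\cond{\infty}{\widehat{M}}\norm{\infty}{x}\) term, using \(\norm{\infty}{M} \approx \norm{\infty}{\widehat{M}}\).

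\textbf{Isolating the error.} We rewrite the solve as
\[
\widehat{M}\widetilde{x}_{i+1} = \widetilde{f}_i - E_S\widetilde{x}_{i+1}.
\]
Subtracting \(\widehat{M}x = Mx - E_Mx = b - Nx - E_Mx\) gives
\[
\widehat{M}\widetilde{e}_{i+1} = -N\widetilde{e}_i + E_N\widetilde{x}_i + \delta_i + E_Mx - E_S\widetilde{x}_{i+1}.
\]
We multiply by \(\widehat{M}^{-1}\) and take norms term by term:
\begin{itemize}
\item the \(E_Mx\) term gives \(\epsilon_M\cond{\infty}{\widehat{M}}\norm{\infty}{x}\) (up to the \(M\) versus \(\widehat{M}\) norm factor);
\item the \(E_S\) term gives \(c_n\epsilon_c\cond{\infty}{\widehat{M}}\norm{\infty}{\widetilde{x}_{i+1}}\);
\item the \(E_N\) and \(\delta_i\) terms give the \(\norm{\infty}{\widehat{M}^{-1}}\norm{\infty}{N}(\cdots)\) line.
\end{itemize}

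\textbf{The contraction term (main obstacle).} The delicate step is the leading term \(\widehat{M}^{-1}N\widetilde{e}_i\). Here we must not split the norm as \(\norm{\infty}{\widehat{M}^{-1}}\norm{\infty}{N}\), since the stated bound keeps the product \(\norm{\infty}{M^{-1}N}\). We write
\[
\widehat{M} = M(I - M^{-1}E_M), \qquad \widehat{M}^{-1}N = (I - M^{-1}E_M)^{-1}M^{-1}N.
\]
A Neumann series with \(\norm{\infty}{M^{-1}E_M} = t \leq 0.5\) gives \(\norm{\infty}{(I-M^{-1}E_M)^{-1}} \leq 1/(1-t) \leq 1+2t\). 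The last inequality holds because \((1-t)(1+2t) = 1 + t - 2t^2 \geq 1\) for \(t \leq 1/2\). This yields the factor \((1+2\norm{\infty}{M^{-1}E_M})\norm{\infty}{M^{-1}N}\).

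The same Neumann bound gives \(\norm{\infty}{\widehat{M}} \leq (1+\epsilon_M)\norm{\infty}{M}\), which controls the conversions between \(\norm{\infty}{M}\) and \(\norm{\infty}{\widehat{M}}\). Care is needed that these conversion factors are genuinely \(\leq 1\) or absorbed. If they are not, I would state the \(\epsilon_M\) and \(\epsilon_c\) terms with \(\norm{\infty}{\widehat{M}^{-1}}\norm{\infty}{M}\) and note its equivalence to the condition number to first order.

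\textbf{The simplified bound.} We substitute \(\norm{\infty}{\widetilde{x}_i}, \norm{\infty}{\widetilde{x}_{i+1}} \lesssim \norm{\infty}{x}\). Since \(n_z\epsilon_c \ll 1\), we have \(\gamma_{n_z} \approx n_z\epsilon_c\). Combining the \(\gamma_{n_z}\) coefficient, inflated by \(1+\epsilon_N \leq 1.1\), with the lone \(\epsilon_c\) (using \(n_z \geq 1\)) bounds the total by \(2.1n_z\epsilon_c\). Collecting the \(\epsilon_c\) from \(b\) with \(c_n\epsilon_c\) gives \((c_n+1)\epsilon_c\cond{\infty}{\widehat{M}}\), which is the second displayed bound.
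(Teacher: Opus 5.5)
Your proposal follows the paper's proof essentially step for step. It uses the same error recursion obtained by subtracting \(\widehat{M}x = b - (N+E_M)x\), the same matrix--vector rounding model for \(\widetilde{f}_i\), and the same contraction bound \(\norm{\infty}{(I - M^{-1}E_M)^{-1}} \leq 1 + 2\norm{\infty}{M^{-1}E_M}\); the paper writes this matrix in Woodbury form as \(I + M^{-1}E_M(I - M^{-1}E_M)^{-1}\) and ends with the identical \(1.1n_z\epsilon_c + n_z\epsilon_c = 2.1n_z\epsilon_c\) bookkeeping. The paper also silently makes the replacement of \(\norm{\infty}{M}\) by \(\norm{\infty}{\widehat{M}}\) that you flag (strictly, one only has \(\norm{\infty}{M} \leq \norm{\infty}{\widehat{M}}/(1-\epsilon_M)\)), and it likewise drops the \((1+\epsilon_N)\) factor on \(\gamma_{n_z}\) in the first bound, so your caveats refine its argument rather than depart from it.
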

\begin{proof}
	We begin by bounding some intermediate quantities.
	By the Woodbury formula,
	\[
		\widehat{M}^{-1} = (I + M^{-1}E_M(I - M^{-1}E_M)^{-1})M^{-1}.
	\]
	By the assumption on \(\norm{\infty}{M^{-1}E_M}\), we have
	\begin{equation}
		\label{eq:M-perturb-bound}
		\norm{\infty}{I + M^{-1}E_M(I - M^{-1}E_M)^{-1}} \leq 1 + 2\norm{\infty}{M^{-1}E_M} \leq 2.
	\end{equation}
	Thus,
	\begin{equation}
		\label{eq:M-hat-bound}
		\norm{\infty}{\widehat{M}} \leq (1+\epsilon_M)\norm{\infty}{M}
		\quad\text{and}\quad
		\norm{\infty}{\widehat{M}^{-1}} \leq (1+2\norm{\infty}{M^{-1}E_M})\norm{\infty}{M^{-1}}.
	\end{equation}
	Next, we consider the error introduced by the round-off errors in the computation.
	By basic error analysis of matrix-vector products~\cite[(3.12)]{higham2002accuracy}, \(b - \widehat{N}\widetilde{x}_i\) is computed such that
	\begin{equation}
		\label{eq:matvec-err}
		\widetilde{f}_i = b - \widehat{N}\widetilde{x}_i + E_{fi}
		\qquad \text{and} \qquad
		\norm{\infty}{E_{fi}} \leq \epsilon_c \norm{\infty}{b} + \gamma_{n_z}\norm{\infty}{\widehat{N}}\norm{\infty}{\widetilde{x}_i}.
	\end{equation}
	Combined with the assumption on the application of the inversion of \(\widehat{M}\), the actual iteration is
	\[
	(\widehat{M} + E_S)\widetilde{x}_{i+1} = b - \widehat{N}\widetilde{x}_i + E_{fi},
	\]
	which is equivalent to
	\begin{equation}
		\label{eq:redux:iter-w/-compute-err}
		\widetilde{x}_{i+1} = \widehat{M}^{-1}(b - \widehat{N}\widetilde{x}_i + E_{fi} - E_S\widetilde{x}_{i+1}).
	\end{equation}
	
	Finally, we can bound \(\widetilde{e}_{i+1} = \widetilde{x}_{i+1} - x\) in terms of \(\widetilde{e}_i\).
	Note that \(Ax = b\) can be rearranged to
	\begin{equation}
		\label{eq:redux:iter-fixed-point}
		x = \widehat{M}^{-1}(b - (N + E_M)x).
	\end{equation}
	Then, substituting \cref{eq:redux:iter-w/-compute-err,eq:redux:iter-fixed-point} into the error definition gives
	\[
		\widetilde{e}_{i+1} = \widehat{M}^{-1}(b - \widehat{N}\widetilde{x}_i + E_{fi} - E_S\widetilde{x}_{i+1}) - \widehat{M}^{-1}(b - (N + E_M)x).
	\]
	Rearranging equations gives
	\begin{equation}
		\label{eq:simple-e}
		\widetilde{e}_{i+1} = \widehat{M}^{-1}N\widetilde{e}_{i} + \widehat{M}^{-1}(E_N\widetilde{x}_i + E_Mx + E_{fi} - E_S\widetilde{x}_{i+1}).
	\end{equation}
	Hence,
	\begin{align*}
		\norm{\infty}{\widetilde{e}_{i+1}}
		\leq&
		(1+2\norm{\infty}{M^{-1}E_M})\norm{\infty}{M^{-1}N}\norm{\infty}{\widetilde{e}_{i}} \\
		&+ \epsilon_N\norm{\infty}{\widehat{M}^{-1}}\norm{\infty}{N}\norm{\infty}{\widetilde{x}_i} + \epsilon_M\cond{\infty}{\widehat{M}}\norm{\infty}{x}
		+ \norm{\infty}{\widehat{M}^{-1}}\norm{\infty}{E_S}\norm{\infty}{\widetilde{x}_{i+1}} \\
		&+ \norm{\infty}{\widehat{M}^{-1}}(\epsilon_c\norm{\infty}{b} + (1+\epsilon_N)\gamma_{n_z}\norm{\infty}{N}\norm{\infty}{\widetilde{x}_i}) \\
		\leq&
		(1+2\norm{\infty}{M^{-1}E_M})\norm{\infty}{M^{-1}N}\norm{\infty}{\widetilde{e}_{i}} \\
		&+ \epsilon_N\norm{\infty}{\widehat{M}^{-1}}\norm{\infty}{N}\norm{\infty}{\widetilde{x}_i} + \epsilon_M\cond{\infty}{\widehat{M}}\norm{\infty}{x} \\
		&+ \epsilon_c\cond{\infty}{\widehat{M}}\norm{\infty}{x} + \norm{\infty}{\widehat{M}^{-1}}\norm{\infty}{N}(\epsilon_c\norm{\infty}{x} + (1+\epsilon_N)\gamma_{n_z}\norm{\infty}{\widetilde{x}_i}) \\
		&+ c_n\epsilon_c\cond{\infty}{\widehat{M}}\norm{\infty}{\widetilde{x}_{i+1}}.
	\end{align*}
	
	Finally, assume \(\norm{\infty}{\widetilde{x}_i} \lesssim \norm{\infty}{x}\), \(\norm{\infty}{\widetilde{x}_{i+1}} \lesssim \norm{\infty}{x}\), \(\epsilon_N \leq 0.1\), and \(n_z\epsilon_c \ll 1\).
	Therefore,
	\begin{align*}
		\norm{\infty}{\widetilde{e}_{i+1}}
		\lesssim&
		(1+2\norm{\infty}{M^{-1}E_M})\norm{\infty}{M^{-1}N}\norm{\infty}{\widetilde{e}_{i}} \\
		&+ (\epsilon_N + 2.1n_z\epsilon_c)\norm{\infty}{\widehat{M}^{-1}}\norm{\infty}{N}\norm{\infty}{x} + (\epsilon_M + \epsilon_c)\cond{\infty}{\widehat{M}}\norm{\infty}{x} \\
		&+ c_n\epsilon_c\cond{\infty}{\widehat{M}}\norm{\infty}{x}.
	\end{align*}
\end{proof}

Assuming \(\norm{\infty}{\widetilde{x}_{i+1}} \lesssim \norm{\infty}{x}\) to bound \(\norm{\infty}{\widetilde{e}_{i+1}}\) may appear to be a circular argument.
However, note that rearranging \cref{eq:simple-e} gives
\begin{align*}
	(I - \widehat{M}^{-1}E_S)^{-1}\widetilde{x}_{i+1} = \widehat{M}^{-1}N\widetilde{e}_{i} + \widehat{M}^{-1}(E_N\widetilde{x}_i + E_Mx + E_{fi}).
\end{align*}
So if
\begin{align*}
	\norm{\infty}{\widetilde{e}_i} \lesssim \norm{\infty}{\widetilde{x}_i} &\lesssim \norm{\infty}{x}, \\
	c_n\epsilon_c\cond{\infty}{\widehat{M}} &\ll 1, &\text{and} \\
	(\epsilon_c + \epsilon_M + \epsilon_N)(\cond{\infty}{\widehat{M}} + \norm{\infty}{\widehat{M}^{-1}}\norm{\infty}{N}) &\ll 1,
\end{align*}
then \(\norm{\infty}{\widetilde{x}_{i+1}} \lesssim \norm{\infty}{x}\).

Additionally, there are a few specific quantities worth considering.
First, note that if \(\epsilon_M\cond{\infty}{\widehat{M}} \leq 0.5\), then \(\norm{\infty}{M^{-1}E_M} \leq 0.5\).
Furthermore, \cref{eq:M-hat-bound} implies that bounds relative to \(\norm{\infty}{\widehat{M}^{-1}}\) and \(\cond{\infty}{\widehat{M}}\) can be easily replaced with bounds based on \(M\).
Finally, probabilistic analysis says that the \(\gamma_{n_z}\) introduced in \cref{eq:matvec-err} will scale as \(\sqrt{n_z}\epsilon_c + O(\epsilon_c^2)\) instead of \(n_z\epsilon_c + O(\epsilon_c^2)\) for large \(n_z\) with high probability~\cite{higham2019new}.

\Cref{thm:main-result} implies that for sufficiently small \(\epsilon_M\) and \(\epsilon_N\), the reduced-precision iteration will converge almost as fast as exact arithmetic until it reaches a relative accuracy dependent on \(\epsilon_c\), \(\epsilon_M\), \(\epsilon_N\) \(\cond{\infty}{M}\), and \(\norm{\infty}{M^{-1}}\norm{\infty}{N}\).

This bound additionally provides insight into the selection of \(\epsilon_M\), \(\epsilon_N\), and \(\epsilon_c\).
As is usually the case, there is no numerical benefit in storing \(M\) and \(N\) in higher precision that the computation is done in.
Second, the approximation of \(N\) doesn't actually affect the rate of convergence but only the maximum achievable accuracy.
Thus, \(\epsilon_N\) can be chosen such that \(\epsilon_N\norm{\infty}{\widehat{M}^{-1}}\norm{\infty}{N}\) is a little less that the desired relative forward error.
Furthermore, as stationary iterations are generally bandwidth bound, \(c_n\) should ideally be chosen so that
\[
	\epsilon_N \geq (n_z+1)\epsilon_c \qquad\text{and}\qquad \epsilon_M \geq (c_n+1)\epsilon_c.
\]
If such a precision is not supported natively in hardware, it is reasonable to decrease the accuracy of \(N\) so that \(\epsilon_N\) is somewhere between \((n_z+1)\epsilon_c\) and \(\sqrt{n_z}\epsilon_c\).
The hardest value to develop a heuristic for is \(\epsilon_M\) as this value directly affects the rate of convergence.
\Cref{thm:main-result} assumes that \(\epsilon_M\cond{\infty}{M} \leq 0.5\) and convergence requires \((1 + 2\epsilon_M\cond{\infty}{M})\norm{\infty}{M^{-1}N} \ll 1\), so the choice of \(\epsilon_M\) should reflect that.
Finally, similar analysis to that of \(\epsilon_N\) should be applied to \(\epsilon_M\) to ensure the maximum achievable accuracy is sufficient.

A key part of the bounds in \cref{thm:main-result} is the condition number \(\cond{\infty}{M}\) and the pseudo-condition number \(\norm{\infty}{M^{-1}}\norm{\infty}{N}\).
Note that if \(D\) is a diagonal matrix that can be applied without round-off error (i.e., it's elements are powers of two), then the rounding errors incurred by the stationary iterations
\[
	DMx_{i+1} = (Db - DNx_i)
	\qquad\text{and}\qquad
	Mx_{i+1} = (b - Nx_i)
\]
will be identical.
However, the choice of \(D\) affects \(\norm{\infty}{(DM)^{-1}}\norm{\infty}{DN}\) and \(\cond{\infty}{DM}\).
This results in the following theorem, which states that the convergence isn't affected by the scaling of block rows.
An analogous theorem can also be proven for column-scaling with the 1-norm.

\begin{corollary}
	\label{thm:block-row-scaling}
	Assume \(A\) is decomposed into block rows \(\mathcal{I}_1, \mathcal{I}_2, \dots, \mathcal{I}_b\).
	Let \(\epsilon_M\), \(\epsilon_N\), and \(c_n\) be such that for every \(1 \leq i \leq b\),
	\[
		\norm{\infty}{E_M[\mathcal{I}_i, :]} \leq \epsilon_M\norm{\infty}{M[\mathcal{I}_i, :]},
		\qquad
		\norm{\infty}{E_N[\mathcal{I}_i, :]} \leq \epsilon_N\norm{\infty}{N[\mathcal{I}_i, :]},
	\]
	\[
		\text{and}\qquad \norm{\infty}{E_S[\mathcal{I}_i, :]}\leq c_n\epsilon_c\norm{\infty}{M[\mathcal{I}_i, :]}
	\]
	(i.e., the errors of a block row are relative to that row).
	Let \(D\) be any diagonal matrix containing powers of two such that the diagonal values of a given block row are identical,\footnotemark{} and assume that \(\cond{\infty}{DM}\epsilon_M \leq 0.5\).
	\footnotetext{I.e., for \(1\leq i\leq b\), the submatrix \(D[\mathcal{I}_i, \mathcal{I}_i]\) is a power of two times the identity matrix.}
	Then,
	\[
		\norm{\infty}{M^{-1}E_M} \leq \epsilon_M\cond{\infty}{DM}.
	\]
	Hence,
	\begin{align*}
		\norm{\infty}{\widetilde{e}_{i+1}}
		\leq&
		(1+2\epsilon_M\cond{\infty}{DM})\norm{\infty}{M^{-1}N}\norm{\infty}{\widetilde{e}_{i}} \\
		&+ \norm{\infty}{(D\widehat{M})^{-1}}\norm{\infty}{DN}((\epsilon_N+\gamma_{n_z})\norm{\infty}{\widetilde{x}_i} + \epsilon_c\norm{\infty}{x}) \\
		&+ (\epsilon_M + \epsilon_c)\cond{\infty}{D\widehat{M}}\norm{\infty}{x} \\
		&+ c_n\epsilon_c\cond{\infty}{D\widehat{M}}\norm{\infty}{\widetilde{x}_{i+1}}.
	\end{align*}
	and if \(\norm{\infty}{\widetilde{x}_i} \lesssim \norm{\infty}{x}\), \(\norm{\infty}{\widetilde{x}_{i+1}} \lesssim \norm{\infty}{x}\), and \(n_z\epsilon_c \ll 1\), then
	\begin{align*}
		\norm{\infty}{\widetilde{e}_{i+1}}
		\lesssim&
		(1+2\epsilon_M\cond{\infty}{DM})\norm{\infty}{M^{-1}N}\norm{\infty}{\widetilde{e}_{i}} \\
		&+ (\epsilon_N + (n_z+1)\epsilon_c)\norm{\infty}{(D\widehat{M})^{-1}}\norm{\infty}{DN}\norm{\infty}{x}\\
		&+ (\epsilon_M + (c_n+1)\epsilon_c)\cond{\infty}{D\widehat{M}}\norm{\infty}{x}.
	\end{align*}
	Because the error is bounded by the smallest such \(D\), poor row scaling is not detrimental to the convergence and accuracy.
\end{corollary}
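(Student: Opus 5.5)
The plan is to apply \cref{thm:main-result} to the row-scaled splitting \(DA = DM + DN\), which has right-hand side \(Db\), and then translate each quantity back. Because \(D\) has power-of-two entries, multiplying by \(D\) commutes exactly with floating-point rounding. The scaled iteration \(DMx_{i+1} = Db - DNx_i\) therefore produces exactly the same iterates \(\widetilde{x}_i\) as the original one, so the errors \(\widetilde{e}_i\) coincide. The stored perturbations scale as \(DE_M\), \(DE_N\), and the solve error as \(DE_S\).

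The first step is to verify the hypotheses of \cref{thm:main-result} for the scaled quantities. \(D\) is constant on each block row \(\mathcal{I}_i\), and the \(\infty\)-norm of a matrix is the maximum over its block rows of the block-row norms. Hence the row-relative assumptions give
\[
\norm{\infty}{DE_M} = \max_i d_i\norm{\infty}{E_M[\mathcal{I}_i,:]} \leq \epsilon_M \max_i d_i\norm{\infty}{M[\mathcal{I}_i,:]} = \epsilon_M\norm{\infty}{DM},
\]
and likewise \(\norm{\infty}{DE_N}\leq\epsilon_N\norm{\infty}{DN}\). For \(E_S\) the same argument gives \(\norm{\infty}{DE_S}\leq c_n\epsilon_c\norm{\infty}{DM}\). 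This needs a small adjustment to the \(\norm{\infty}{D\widehat{M}}\) required by the theorem, using \(\norm{\infty}{D\widehat M}\geq(1-\epsilon_M)\norm{\infty}{DM}\). Alternatively, one can read the hypothesis as written relative to \(M\) and carry it through, since the proof only uses the bound on \(\norm{\infty}{\widehat M^{-1}}\norm{\infty}{E_S}\).

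The key step is the claimed bound \(\norm{\infty}{M^{-1}E_M}\leq\epsilon_M\cond{\infty}{DM}\). It follows from the identity \(M^{-1}E_M = (DM)^{-1}(DE_M)\) together with submultiplicativity and the scaled bound above:
\[
\norm{\infty}{M^{-1}E_M}\leq\norm{\infty}{(DM)^{-1}}\,\epsilon_M\norm{\infty}{DM} = \epsilon_M\cond{\infty}{DM}.
\]
With the assumption \(\epsilon_M\cond{\infty}{DM}\leq 0.5\), this gives the hypothesis \(\norm{\infty}{(DM)^{-1}DE_M}\leq0.5\). The contraction factor is unchanged, \((DM)^{-1}DN = M^{-1}N\). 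The only norm in the theorem that is not scale-invariant is the one on \(b\), and it enters only through \(\norm{\infty}{Db}=\norm{\infty}{DAx}\leq(\norm{\infty}{DM}+\norm{\infty}{DN})\norm{\infty}{x}\), which is exactly how the theorem's proof already handled \(\norm{\infty}{b}\).

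Applying the theorem then yields the first displayed bound with \(\widehat M, N\) replaced by \(D\widehat M, DN\), and with the leading factor bounded via the key step. For the simplified bound, I would repeat the final step of the theorem's proof. Using \(\gamma_{n_z}\approx n_z\epsilon_c\), the \(\gamma_{n_z}\) term and the \(\epsilon_c\norm{\infty}{x}\) term combine to \((n_z+1)\epsilon_c\); unlike the theorem, this does not need the \(1.1\) slack from \(\epsilon_N\leq 0.1\), since \(\lesssim\) absorbs the second-order term \(\epsilon_N\gamma_{n_z}\). The \(c_n\epsilon_c\) term is handled as before.

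I expect the main obstacle to be bookkeeping rather than depth. One issue is checking carefully that the exactness of power-of-two scaling also covers the triangular or block solves, so that \(E_S\) really transforms as \(DE_S\). The other is reconciling the \(\norm{\infty}{M[\mathcal I_i,:]}\) versus \(\norm{\infty}{\widehat M[\mathcal I_i,:]}\) normalization of \(E_S\), which only costs a \(1+O(\epsilon_M)\) factor. The closing remark, that the error is bounded by the best such \(D\), is immediate because the iterates do not depend on \(D\).
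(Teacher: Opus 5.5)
Your proposal is correct and follows the paper's proof. The paper likewise turns the block-row hypotheses into \(\norm{\infty}{DE_M}\le\epsilon_M\norm{\infty}{DM}\), \(\norm{\infty}{DE_N}\le\epsilon_N\norm{\infty}{DN}\) and \(\norm{\infty}{DE_S}\le c_n\epsilon_c\norm{\infty}{DM}\), then applies \cref{thm:main-result} to the row-scaled splitting. The paper's one-line proof leaves implicit two details you make explicit: the bound \(\norm{\infty}{M^{-1}E_M}\le\epsilon_M\cond{\infty}{DM}\) via \(M^{-1}E_M=(DM)^{-1}(DE_M)\), and the fact that \(DE_S\) is controlled by \(\norm{\infty}{DM}\) rather than the \(\norm{\infty}{D\widehat M}\) the theorem assumes, which costs a \(1+O(\epsilon_M)\) factor.
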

\begin{proof}
	By the choice of \(\epsilon_M\), \(\epsilon_N\), and \(c_n\), we have
	\[
		\norm{\infty}{DE_M} \leq \epsilon_M\norm{\infty}{DM},
		\qquad
		\norm{\infty}{DE_N} \leq \epsilon_N\norm{\infty}{DN},
	\]
	\[
		\text{and}\qquad \norm{\infty}{DE_S}\leq c_n\epsilon_c\norm{\infty}{DM}.
	\]
	Then, \cref{thm:main-result} applies and provides the result.
\end{proof}

It may not be obvious that the block-row assumption on \(E_S\) holds for the usual stationary methods.
So, consider the case where \(M\) is block lower triangular (which includes the block Jacobi, Gauss-Seidel, and successive over-relaxation iterations).
Then, \(E_S\) is the error incurred in computing
\[
	M[\mathcal{I}_i,\mathcal{I}_i]^{-1}\Biggl(\widetilde{f}_i[\mathcal{I}_i] - \sum_{j=1}^{i-1} M[\mathcal{I}_i,\mathcal{I}_j]\widetilde{x}_{i+1}[\mathcal{I}_j] \Biggr).
\]
By standard error results~\cite[(3.12)]{higham2002accuracy}, the error in computing the parenthetical quantity is bounded by \({\gamma_{n}|M[\mathcal{I}_i, :]||\widetilde{x}_{i+1}|}\).
The error on inverting the diagonal block depends on the specific algorithm, but generally Gaussian elimination is used.
The error for Gaussian elimination depends on a small polynomial of the block size times the growth factor.
For many important classes of matrices (e.g., symmetric positive definite or diagonally dominant), the growth factor is a small constant.
For general matrices, the growth factor can be much larger (especially if pivoting is not used), but if the diagonal blocks all have similar enough numerical structure, the growth factor won't vary significantly between them.
So, there will generally be a constant \(c_n\) that encompasses the polynomial and growth factor terms.

\subsection{Higher accuracy with iterative refinement}
\label{sec:theory:ir}

A key limitation of the above bounds is the maximum achievable accuracy.
While some applications may only need a few digits of accuracy, many others require higher accuracy.
The standard technique to produce a high-precision solution from a low-accuracy solver is mixed-precision iterative refinement.
\Cref{alg:restarted-stationary} describes the mixed-precision algorithm.
\begin{algorithm}[tbp]
	\caption{Restarted Stationary Iterations}
	\label{alg:restarted-stationary}
	\begin{algorithmic}[1]
		\State \(x_0 = 0\)
		\For{\(i\) in \(0, \dots, \ell\)}
		\State \(r_i = b - Ax_i\)
		\State \(y_{i,0} = 0\)
		\For{\(j\) in \(0, \dots, k_i\)}
		\State \(y_{i,j+1} = M^{-1}r_i - M^{-1}Ny_{i,j}\)
		\EndFor
		\State \(x_{i+1} = x_{i} + y_{i,k_i+1}\)
		\EndFor
	\end{algorithmic}
\end{algorithm}
Note that due to the similarity with restarted GMRES, we refer to the outer iterations as restarts.
Furthermore, recall that iterative refinement is equivalent to Newton's method~\cite[Sec.~2.5]{demmel1997applied},
so applications where the stationary iteration is embedded within Newton's method may be already able to refine the solution to higher accuracy.

Usefully, we can show that in exact arithmetic using \cref{alg:restarted-stationary} is equivalent to just doing the stationary iteration.
Consider
\[
	z_{i,j} = x_{i} + y_{i,j}.
\]
Note that \(z_{i,k_i+1} = z_{i+1,0}\) and
\begin{align*}
	z_{i,j+1}
	&= x_i + (M^{-1}r_i - M^{-1}Ny_{i,j}) \\
	&= x_i + M^{-1}b - x_i - M^{-1}Nx_i - M^{-1}Ny_{i,j} \\
	&= M^{-1}b - M^{-1}Nz_{i,j}.
\end{align*}
Thus, \(z_{i,j}\) are the iterates of the stationary iteration without iterative refinement.\footnote{Similarly, taking \(k_i=0\) also shows that a stationary iteration is equivalent in exact precision to iterative refinement with \(M^{-1}\) as the inner solver.}
This is a notable result as it implies that restarting will not affect the rate of convergence in exact arithmetic.
This stands in sharp contrast to restarted GMRES for which the rate of convergence can be significantly affected by the choice of restarts~\cite{vandervorst1993superlinear,embree2003tortoise}.
The difference is that more iterations before restarting gives GMRES a larger subspace to optimize over~\cite{vandervorst1993superlinear}, while each iteration in a stationary method is independent of any other.

Within the iterative refinement setting, the inner iteration can either run until low precision is saturated or it can restart sooner.
If it restarts sooner, then \cref{thm:main-result} and the equivalence with non-restarted iterations indicate that the mixed-precision approach will converge similarly to a full-precision implementation.
So, consider the case where low-precision arithmetic is saturated so that \(\norm{\infty}{A^{-1}r_i - y_{i,k_i+1}} \leq c\epsilon \norm{\infty}{A^{-1}r_i}\).
With a slight modification to a bound by Carson and Higham\footnotemark~\cite[Thm.~3.2 and Cor.~3.3]{carson2018accelerating},
\footnotetext{Their formulation of iterative refinement includes an extra rounding of the residual.}
it is straightforward to show that if \(c\epsilon \ll 1\), the residual will improve by a factor of about \(c\epsilon\) after each restart until an \(\hat{x}\) is computed which satisfies
\[
	\frac{\norm{\infty}{x - \hat{x}}}{\norm{\infty}{x}}
	\lesssim 4n_z\epsilon_c\mathrm{cond}(A, x)
\]
where \(\epsilon_c\) is the unit round-off for the arithmetic used to compute the residual and
\[
	\mathrm{cond}(A, x) = \frac{\norm{\infty}{|A|\,|A^{-1}|\,|x|}}{\norm{\infty}{|x|}} \leq \cond{\infty}{A}.
\]

An alternative to iterative refinement is to increase the accuracy of the computation as the iteration progresses similar to the work of Anzt et al.~\cite{anzt2015adaptive,grutzmacher2020customized}.
However, in such an approach, only the early iterations benefit from reduced precision.
Thus, we expect iterative refinement to provide better performance if moderate to high accuracy is required.

\section{Experimental Results}

To demonstrate the practical effectiveness of this mixed-precision approach, we tested it on various matrices from the SuiteSparse collection~\cite{davis2011university} and NASA's OVERFLOW code for compressible computational fluid dynamics~\cite{jackson2024gpu}.
While the theoretical analysis allows for variable block sizes, our implementation assumes a uniform block size for each matrix.

Both test codes are based on using stationary iterations to solve the equation \((D^{-1}A)x = (D^{-1}b)\) where \(D\) is the block diagonal of \(A\).
This has several major benefits over directly iterating on \(A\).
First, it normalizes the scale of the rows, preventing problematic overflow and underflow in the limited dynamic range of half precision.
Otherwise, values larger than \num{65504} will overflow to infinity and values smaller than \num{6.10e-5} will lose precision.
Second, this preprocessing step avoids the cost of doing the triangular solves for the block-rows in each iteration, which improves performance.
Finally, at the start of the solve or after a restart, we can take the initial solution vector to be the right-hand side; this is equivalent to a Jacobi iteration, at the low cost of a vector copy.
Because of this and the cost of computing the residual, a restart is counted as an iteration.

Our data is available in the supplemental materials.\footnotemark
\footnotetext{Note that, due to NASA restrictions, the source code for the half-precision OVERFLOW is not available as part of our code artifact.  However, the half-precision implementation is available to US Persons in version 2.5.2 of OVERFLOW (\url{https://software.nasa.gov/software/LAR-20382-1}).}
The tests were run on a NVIDIA H200 SXM GPU with a \SI{140}{\gibi\byte} memory with a peak bandwidth of \SI{4.8}{\tebi\byte\per\second}, and 132 SMs with a peak single-precision throughput of \SI{67}{\tera\flop\per\second}.
The code was compiled and run in the 26.5 NVIDIA HPC SDK Docker container\footnote{\url{nvcr.io/nvidia/nvhpc:26.5-devel-cuda_multi-ubuntu24.04}} with CUDA Toolkit 13.2 and Ubuntu 24.04.

\subsection{SuiteSparse matrices}

We begin by testing the mixed-precision approach with block Jacobi and block Gauss-Seidel iterations on a variety of matrices from the SuiteSparse collection~\cite{davis2011university}.
\Cref{tab:test-matrices} shows the specific matrices we used.
\begin{table}
	\caption{Test matrices from the SuiteSparse collection}
	\label{tab:test-matrices}
	\centering
	\begin{tabular}{lS[table-format=8.0]S[table-format=9.0]rc}
		Name & \(n\) & \(n_{nz}\) & \(n_b\) & Kind \\
		\midrule
		\matrixname{atmosmodj} & 1270432 & 8814880 & 4 & Computational Fluid Dynamics \\
		\matrixname{cage14} & 1505785 & 27130349 & 5 & Directed Weighted Graph \\
		\matrixname{Dubcova2} & 65025 & 1030225 & 1 & 2D/3D Problem \\
		\matrixname{language} & 399130 & 1216334 & 5 & Directed Weighted Graph \\
		\matrixname{poisson3Db} & 85623 & 2374949 & 1 & Computational Fluid Dynamics \\
		\matrixname{thermomech\_dM} & 204316 & 1423116 & 4 & Thermal Problem \\
		\matrixname{torso2} & 115967 & 1033473 & 1 & 2D/3D Problem \\
	\end{tabular}
\end{table}
(Note that some matrices use a block size of 1, making them equivalent to scalar Jacobi or Gauss-Seidel.)
These matrices were chosen based on whether the double-precision solvers can converge within a reasonable number of iterations.
The right-hand sides were generated from a known solution vector of \(x[i] = \sin(2i\pi/n)\) (\(i = 0, \dots, n-1\)).
This allows our analysis to include the forward error.

We begin by looking at the rate of convergence of block Jacobi on representative matrices in the infinity-norm forward error,
\[
	\frac{\norm{\infty}{x - x^*}}{\norm{\infty}{x^*}},
\]
shown in \Cref{fig:convergence:jac-cage14,fig:convergence:jac-language}.
\begin{figure}
	\centering
	\input{results/latex-plot-acc-jac-cage14.tex}
	\caption{Convergence rate of block Jacobi on \matrixname{cage14} with different precisions.  Vertical lines indicate the iterations in which restarts occur.}
	\label{fig:convergence:jac-cage14}
\end{figure}
\begin{figure}
	\centering
	\input{results/latex-plot-acc-jac-language.tex}
	\caption{Convergence rate of block Jacobi on \matrixname{language} with different precisions.  Vertical lines indicate the iterations in which restarts occur.}
	\label{fig:convergence:jac-language}
\end{figure}
There are several key observations.

First, without restarting, the mixed-precision implementations have the same initial rate of convergence as the double-precision implementation.
This aligns with the theory in \cref{sec:theory} and previous work on reduced-precision iterative methods~\cite{lindquist2020improving,lindquist2022accelerating,kelly2022newtons}.
The ability for low-precision arithmetic to converge just as quickly as high-precision arithmetic is quite powerful.
In some contexts (such as Newton's method), we only need a solution accurate to a few decimal places, thus rate of convergence is the only important aspect.

Furthermore, even with restarting, the mixed-precision convergence behaves reasonably close to the double precision, non-restarted convergence and can converge to full double-precision accuracy.
This comes from the fact, as discussed in \cref{sec:theory:ir}, that the restarted iteration is equivalent to the non-restarted iteration.
So, for sufficiently frequent restarts, the precision error will remain a relatively small component of the iterate and have limited effect on the rate of convergence.
In the extreme case, restarting every iteration is equivalent to doing the solve entirely in double precision.

Note that Jacobi does not reach a forward error of \(10^{-15}\) on the \matrixname{language} matrix for any precision.
Both \cref{thm:main-result} and previous results~\cite[pg.~326]{higham2019new} suggest that an error will generally be greater than \(u\norm{\infty}{M^{-1}}\norm{\infty}{N}\).
For this Jacobi iteration matrix,\footnotemark{} \(\norm{\infty}{M^{-1}}\norm{\infty}{N} = \num{1.623e3}\). 
\footnotetext{Although, the spectral radius of the iteration matrix is approximately \(\num{0.6}\) which aligns with the latter part of the convergence.  See \texttt{analysis.jl} in our code artifact for more details.}
So, a maximum achievable accuracy on the order of \(10^{-12}\) is reasonable.

Finally, restarting does not affect the rate of convergence for the double-precision implementation.
This matches the theory in \cref{sec:theory:ir} that the restarted iteration is equivalent to the non-restarted iteration when round-off errors are negligible.

We repeat the experiment with block Gauss-Seidel, as shown in \Cref{fig:convergence:gs-cage14,fig:convergence:gs-language}.
\begin{figure}
	\centering
	\begin{tikzpicture}
\begin{semilogyaxis}[
      width=4in, height=2.7in,
      ylabel={\(\infty\)-norm forward error},
      ytickten={4, 2, 0, -2, -4, -6, -8, -10, -12, -14, -16},
      xlabel=Iterations, xmin=0, xmax=50,
      xticklabel style={/pgf/number format/fixed, /pgfplots/scaled x ticks=false},
      minor xtick={1,6,11,16,21,26,31,36,41,46},
      grid=minor, grid style={line width=1pt, draw=gray!50},
      legend style={at={(0.5,1.02)}, anchor=south, font=\small},
      legend cell align=left, legend columns=2]
\addplot[dashed,color_bfloat16,very thick] coordinates {
    (0, 1) (1, 1.320591e+00) (2, 1.273330e+00) (3, 3.273540e-01) (4, 4.435058e-02) (5, 1.052346e-02) (6, 3.700567e-03) (7, 3.058340e-03) (8, 3.069183e-03) (9, 3.059522e-03) (10, 3.057608e-03) (11, 3.057345e-03) (12, 3.057253e-03) (13, 3.057226e-03) (14, 3.057219e-03) (15, 3.057216e-03) (16, 3.057216e-03) (17, 3.057215e-03) (18, 3.057215e-03) (19, 3.057215e-03) (20, 3.057215e-03) (21, 3.057215e-03) (22, 3.057215e-03) (23, 3.057215e-03) (24, 3.057215e-03) (25, 3.057215e-03) (26, 3.057215e-03) (27, 3.057215e-03) (28, 3.057215e-03) (29, 3.057215e-03) (30, 3.057215e-03) (31, 3.057215e-03) (32, 3.057215e-03) (33, 3.057215e-03) (34, 3.057215e-03) (35, 3.057215e-03) (36, 3.057215e-03) (37, 3.057215e-03) (38, 3.057215e-03) (39, 3.057215e-03) (40, 3.057215e-03) (41, 3.057215e-03) (42, 3.057215e-03) (43, 3.057215e-03) (44, 3.057215e-03) (45, 3.057215e-03) (46, 3.057215e-03) (47, 3.057215e-03) (48, 3.057215e-03) (49, 3.057215e-03) (50, 3.057215e-03) 
};
\addlegendentry[align=left]{ bfloat16 - no restarts };
\addplot[solid,color_bfloat16,very thick] coordinates {
    (0, 1) (1, 1.320591e+00) (2, 1.273330e+00) (3, 3.265927e-01) (4, 4.462016e-02) (5, 1.050109e-02) (6, 3.771033e-03) (7, 3.443561e-03) (8, 1.187584e-03) (9, 3.855802e-04) (10, 1.233103e-04) (11, 3.881725e-05) (12, 3.604872e-05) (13, 1.155257e-05) (14, 3.724426e-06) (15, 1.217927e-06) (16, 3.777648e-07) (17, 3.551374e-07) (18, 1.130594e-07) (19, 3.646756e-08) (20, 1.192347e-08) (21, 3.692561e-09) (22, 3.477059e-09) (23, 1.106502e-09) (24, 3.563600e-10) (25, 1.165153e-10) (26, 3.608162e-11) (27, 3.395067e-11) (28, 1.079760e-11) (29, 3.477866e-12) (30, 1.137057e-12) (31, 3.520387e-13) (32, 3.313261e-13) (33, 1.054099e-13) (34, 3.390923e-14) (35, 1.104400e-14) (36, 3.407318e-15) (37, 3.195170e-15) (38, 1.887379e-15) (39, 1.887379e-15) (40, 1.887379e-15) (41, 1.887379e-15) (42, 1.887379e-15) (43, 1.887379e-15) (44, 1.887379e-15) (45, 1.887379e-15) (46, 1.887379e-15) (47, 1.887379e-15) (48, 1.887379e-15) (49, 1.887379e-15) (50, 1.887379e-15) 
};
\addlegendentry[align=left]{ bfloat16 - restarts };
\addplot[dashed,color_half,very thick] coordinates {
    (0, 1) (1, 1.320591e+00) (2, 1.272153e+00) (3, 3.274225e-01) (4, 4.466155e-02) (5, 1.016958e-02) (6, 3.843965e-03) (7, 1.367643e-03) (8, 4.582280e-04) (9, 3.870485e-04) (10, 3.855929e-04) (11, 3.852117e-04) (12, 3.851052e-04) (13, 3.850721e-04) (14, 3.850619e-04) (15, 3.850589e-04) (16, 3.850579e-04) (17, 3.850576e-04) (18, 3.850575e-04) (19, 3.850574e-04) (20, 3.850574e-04) (21, 3.850574e-04) (22, 3.850574e-04) (23, 3.850574e-04) (24, 3.850574e-04) (25, 3.850574e-04) (26, 3.850574e-04) (27, 3.850574e-04) (28, 3.850574e-04) (29, 3.850574e-04) (30, 3.850574e-04) (31, 3.850574e-04) (32, 3.850574e-04) (33, 3.850574e-04) (34, 3.850574e-04) (35, 3.850574e-04) (36, 3.850574e-04) (37, 3.850574e-04) (38, 3.850574e-04) (39, 3.850574e-04) (40, 3.850574e-04) (41, 3.850574e-04) (42, 3.850574e-04) (43, 3.850574e-04) (44, 3.850574e-04) (45, 3.850574e-04) (46, 3.850574e-04) (47, 3.850574e-04) (48, 3.850574e-04) (49, 3.850574e-04) (50, 3.850574e-04) 
};
\addlegendentry[align=left]{ half - no restarts };
\addplot[solid,color_half,very thick] coordinates {
    (0, 1) (1, 1.320591e+00) (2, 1.272153e+00) (3, 3.251016e-01) (4, 4.478396e-02) (5, 1.027602e-02) (6, 3.893644e-03) (7, 3.516516e-03) (8, 1.214073e-03) (9, 3.923475e-04) (10, 1.242425e-04) (11, 4.122724e-05) (12, 3.693070e-05) (13, 1.188056e-05) (14, 3.830377e-06) (15, 1.239461e-06) (16, 4.072780e-07) (17, 3.654540e-07) (18, 1.167872e-07) (19, 3.752658e-08) (20, 1.216119e-08) (21, 3.994336e-09) (22, 3.584650e-09) (23, 1.144530e-09) (24, 3.681229e-10) (25, 1.195461e-10) (26, 3.918959e-11) (27, 3.518637e-11) (28, 1.122808e-11) (29, 3.611104e-12) (30, 1.168109e-12) (31, 3.836767e-13) (32, 3.443154e-13) (33, 1.099314e-13) (34, 3.536195e-14) (35, 1.147649e-14) (36, 3.764346e-15) (37, 3.373955e-15) (38, 1.776357e-15) (39, 1.776357e-15) (40, 1.776357e-15) (41, 1.998401e-15) (42, 1.998401e-15) (43, 1.998401e-15) (44, 1.998401e-15) (45, 1.998401e-15) (46, 1.998401e-15) (47, 1.998401e-15) (48, 1.998401e-15) (49, 1.998401e-15) (50, 1.998401e-15) 
};
\addlegendentry[align=left]{ half - restarts };
\addplot[dashed,color_single,very thick] coordinates {
    (0, 1) (1, 1.320591e+00) (2, 1.272352e+00) (3, 3.237934e-01) (4, 4.446093e-02) (5, 1.012051e-02) (6, 3.801549e-03) (7, 1.316061e-03) (8, 4.087395e-04) (9, 1.335524e-04) (10, 4.422423e-05) (11, 1.438124e-05) (12, 4.692236e-06) (13, 1.536190e-06) (14, 5.024474e-07) (15, 1.647887e-07) (16, 5.454630e-08) (17, 4.602019e-08) (18, 4.586700e-08) (19, 4.582390e-08) (20, 4.581111e-08) (21, 4.580733e-08) (22, 4.580617e-08) (23, 4.580580e-08) (24, 4.580569e-08) (25, 4.580565e-08) (26, 4.580564e-08) (27, 4.580564e-08) (28, 4.580564e-08) (29, 4.580564e-08) (30, 4.580564e-08) (31, 4.580564e-08) (32, 4.580564e-08) (33, 4.580564e-08) (34, 4.580564e-08) (35, 4.580564e-08) (36, 4.580564e-08) (37, 4.580564e-08) (38, 4.580564e-08) (39, 4.580564e-08) (40, 4.580564e-08) (41, 4.580564e-08) (42, 4.580564e-08) (43, 4.580564e-08) (44, 4.580564e-08) (45, 4.580564e-08) (46, 4.580564e-08) (47, 4.580564e-08) (48, 4.580564e-08) (49, 4.580564e-08) (50, 4.580564e-08) 
};
\addlegendentry[align=left]{ single - no restarts };
\addplot[solid,color_single,very thick] coordinates {
    (0, 1) (1, 1.320591e+00) (2, 1.272352e+00) (3, 3.313353e-01) (4, 4.557322e-02) (5, 1.039874e-02) (6, 3.888319e-03) (7, 3.537105e-03) (8, 1.217100e-03) (9, 3.935594e-04) (10, 1.247805e-04) (11, 4.106452e-05) (12, 3.704460e-05) (13, 1.191153e-05) (14, 3.835581e-06) (15, 1.245365e-06) (16, 4.053010e-07) (17, 3.668220e-07) (18, 1.171651e-07) (19, 3.761495e-08) (20, 1.222489e-08) (21, 3.982430e-09) (22, 3.600186e-09) (23, 1.148698e-09) (24, 3.686623e-10) (25, 1.197605e-10) (26, 3.897181e-11) (27, 3.522295e-11) (28, 1.123245e-11) (29, 3.607125e-12) (30, 1.170875e-12) (31, 3.810720e-13) (32, 3.447016e-13) (33, 1.099423e-13) (34, 3.534884e-14) (35, 1.144516e-14) (36, 3.691376e-15) (37, 3.334510e-15) (38, 1.776357e-15) (39, 1.776357e-15) (40, 1.776357e-15) (41, 1.998401e-15) (42, 1.998401e-15) (43, 1.998401e-15) (44, 1.998401e-15) (45, 1.998401e-15) (46, 1.998401e-15) (47, 1.998401e-15) (48, 1.998401e-15) (49, 1.998401e-15) (50, 1.998401e-15) 
};
\addlegendentry[align=left]{ single - restarts };
\addplot[dashed,color_double,very thick] coordinates {
    (0, 1) (1, 1.320591e+00) (2, 1.272352e+00) (3, 3.239457e-01) (4, 4.470530e-02) (5, 1.013700e-02) (6, 3.804995e-03) (7, 1.315875e-03) (8, 4.102703e-04) (9, 1.342864e-04) (10, 4.441246e-05) (11, 1.444222e-05) (12, 4.695177e-06) (13, 1.535417e-06) (14, 5.011003e-07) (15, 1.635277e-07) (16, 5.337886e-08) (17, 1.742411e-08) (18, 5.683640e-09) (19, 1.853595e-09) (20, 6.050895e-10) (21, 1.974830e-10) (22, 6.438606e-11) (23, 2.098990e-11) (24, 6.843283e-12) (25, 2.231094e-12) (26, 7.271951e-13) (27, 2.368430e-13) (28, 7.725237e-14) (29, 2.518855e-14) (30, 8.196936e-15) (31, 2.640183e-15) (32, 1.887379e-15) (33, 1.887379e-15) (34, 1.887379e-15) (35, 1.887379e-15) (36, 1.887379e-15) (37, 1.887379e-15) (38, 1.887379e-15) (39, 1.887379e-15) (40, 1.887379e-15) (41, 1.887379e-15) (42, 1.887379e-15) (43, 1.887379e-15) (44, 1.887379e-15) (45, 1.887379e-15) (46, 1.887379e-15) (47, 1.887379e-15) (48, 1.887379e-15) (49, 1.887379e-15) (50, 1.887379e-15) 
};
\addlegendentry[align=left]{ double - no restarts };
\addplot[solid,color_double,very thick] coordinates {
    (0, 1) (1, 1.320591e+00) (2, 1.272352e+00) (3, 3.250966e-01) (4, 4.467458e-02) (5, 1.022813e-02) (6, 3.829881e-03) (7, 3.474227e-03) (8, 1.199032e-03) (9, 3.876344e-04) (10, 1.229937e-04) (11, 4.049113e-05) (12, 3.654272e-05) (13, 1.175470e-05) (14, 3.784994e-06) (15, 1.226313e-06) (16, 3.995530e-07) (17, 3.613051e-07) (18, 1.153930e-07) (19, 3.710734e-08) (20, 1.204956e-08) (21, 3.921290e-09) (22, 3.544294e-09) (23, 1.130733e-09) (24, 3.637531e-10) (25, 1.181072e-10) (26, 3.842666e-11) (27, 3.473942e-11) (28, 1.108224e-11) (29, 3.565262e-12) (30, 1.158064e-12) (31, 3.766813e-13) (32, 3.405941e-13) (33, 1.086007e-13) (34, 3.491976e-14) (35, 1.132977e-14) (36, 3.683111e-15) (37, 3.313022e-15) (38, 1.887379e-15) (39, 1.887379e-15) (40, 1.887379e-15) (41, 1.887379e-15) (42, 1.887379e-15) (43, 1.887379e-15) (44, 1.887379e-15) (45, 1.887379e-15) (46, 1.998401e-15) (47, 1.998401e-15) (48, 1.998401e-15) (49, 1.998401e-15) (50, 1.998401e-15) 
};
\addlegendentry[align=left]{ double - restarts };
\end{semilogyaxis}
\end{tikzpicture}
	\caption{Convergence rate of block Gauss-Seidel on \matrixname{cage14} with different precisions.  Vertical lines indicate the iterations in which restarts occur.}
	\label{fig:convergence:gs-cage14}
\end{figure}
\begin{figure}
	\centering
	\input{results/latex-plot-acc-gs-language.tex}
	\caption{Convergence rate of block Gauss-Seidel on \matrixname{language} with different precisions.  Vertical lines indicate the iterations in which restarts occur.}
	\label{fig:convergence:gs-language}
\end{figure}
Like Jacobi, mixed-precision Gauss-Seidel converged at similar rates as double precision and, when restarted, could achieve full double-precision accuracy.

For \matrixname{cage14}, restarting Gauss-Seidel had a notable effect on the rate of convergence.
This is caused by the restart iterations performing a Jacobi step, which has slower convergence, instead of a Gauss-Seidel step.
Note that such a difference was not obvious for the \matrixname{language} matrix, although the restarted configurations do appear to generally be slightly slower.
In part, Jacobi converged nearly as quickly as Gauss-Seidel for this matrix (compare \cref{fig:convergence:jac-language,fig:convergence:gs-language}),
so the restart iterations had almost as much improvement as the normal iterations.
Additionally, the iteration is sensitive to small perturbations in the working solution; for example, note that the single and double precision lines slightly separate well before reaching single-precision accuracy.
This instability helps hide the difference in convergence between the restarted and non-restarted configuration.
Note that the initial transient growth of the error suggests large pseudospectral radius~\cite[(18.8)]{higham2002accuracy}, which is likely the source of the instability.

Now, we consider the effect on performance.
The iterations are run until they compute a solution that satisfies
\[
	\frac{\norm{\infty}{x - x^*}}{\norm{\infty}{x^*}} \leq 10^{-10}.
\]
Note that one of the primary benefits of stationary iterations over Krylov-subspace methods on modern systems is the lack of global reductions, so in many production use cases, a fixed iteration count is used for the stationary iteration instead of a runtime convergence criteria.
(Furthermore, the forward error is unknown in real applications.)
We mirror this by using a fixed number of iterations for the timed solve, which is computed by a preliminary, untimed solve.
Additionally, we restart the mixed-precision configurations based on the number of iterations required for the double precision implementation.
The BFloat16 restart length is a fifth of the total double-precision iterations, the half-precision restart length is a fourth, and the single-precision restart is a half, all rounding down.
These restart heuristics were chosen based on \cref{fig:convergence:jac-cage14,fig:convergence:jac-language,fig:convergence:gs-cage14,fig:convergence:gs-language}.

\sisetup{separate-uncertainty=true,table-align-uncertainty=true}
\setlength{\tabcolsep}{4.5pt}
\begin{sidewaystable}
	\caption{Performance of the mixed-precision approach for Jacobi iterations.  Uncertainty intervals indicate 3 standard deviations.}
	\label{tab:performance-jacobi}
	\centering
	\begin{tabular}{l*{4}{S[table-format=5.2(5),table-auto-round]S[table-format=6.0]}}
		& \multicolumn{2}{c}{Double} & \multicolumn{2}{c}{Single} & \multicolumn{2}{c}{Half} & \multicolumn{2}{c}{BFloat16} \\
\cmidrule(lr){2-3}\cmidrule(lr){4-5}\cmidrule(lr){6-7}\cmidrule(lr){8-9}
Matrix & {Time (ms)} & {Iter.} & {Time (ms)} & {Iter.} & {Time (ms)} & {Iter.} & {Time (ms)} & {Iter.} \\
\midrule
\matrixname{atmosmodj} & 4263.46 +- 20.89 & 44138 & 3793.24 +- 14.6 & 44570 & 8569.95 +- 52.86 & 114487 & 14627.7 +- 57.38 & 194997 \\
\matrixname{cage14} & 98.05 +- 0.64 & 181 & 71.06 +- 2.0 & 181 & 65.63 +- 0.09 & 181 & 65.89 +- 0.08 & 181 \\
\matrixname{language} & 132.57 +- 0.38 & 64 & 116.44 +- 0.06 & 65 & 70.46 +- 0.32 & 61 & 79.14 +- 0.42 & 66 \\
\matrixname{thermomech\_dM} & 2386.03 +- 1.51 & 87372 & 1711.34 +- 4.9 & 87373 & 1197.79 +- 3.02 & 87265 & 1207.46 +- 0.44 & 85700 \\
\matrixname{torso2} & 1.26 +- 0.05 & 236 & 1.21 +- 0.01 & 236 & 1.08 +- 0.01 & 236 & 1.11 +- 0.08 & 239 \\

	\end{tabular}
\end{sidewaystable}%
\begin{sidewaystable}
	\caption{Performance of the mixed-precision approach for Gauss-Seidel iterations.  Uncertainty intervals indicate 3 standard deviations.}
	\label{tab:performance-gs}
	\centering
	\begin{tabular}{l*{4}{S[table-format=5.2(5),table-auto-round]S[table-format=6.0]}}
		& \multicolumn{2}{c}{Double} & \multicolumn{2}{c}{Single} & \multicolumn{2}{c}{Half} & \multicolumn{2}{c}{BFloat16} \\
\cmidrule(lr){2-3}\cmidrule(lr){4-5}\cmidrule(lr){6-7}\cmidrule(lr){8-9}
Matrix & {Time (ms)} & {Iter.} & {Time (ms)} & {Iter.} & {Time (ms)} & {Iter.} & {Time (ms)} & {Iter.} \\
\midrule
\matrixname{Dubcova2} & 1010.86 +- 0.24 & 30141 & 1022.42 +- 0.16 & 30142 & 977.87 +- 0.1 & 29946 & 1000.98 +- 0.11 & 30144 \\
\matrixname{atmosmodj} & 2706.6 +- 2.63 & 23394 & 2407.73 +- 1.6 & 23602 & 5977.33 +- 5.26 & 66476 & 9299.84 +- 3.68 & 103289 \\
\matrixname{cage14} & 14.67 +- 0.11 & 22 & 11.81 +- 0.1 & 23 & 12.33 +- 0.13 & 25 & 12.9 +- 0.1 & 26 \\
\matrixname{language} & 121.44 +- 0.38 & 53 & 97.89 +- 0.44 & 51 & 64.53 +- 0.45 & 49 & 68.62 +- 0.48 & 51 \\
\matrixname{poisson3Db} & 613.11 +- 1.01 & 11169 & 601.73 +- 0.25 & 11161 & 588.1 +- 0.15 & 11288 & 819.61 +- 0.15 & 15726 \\
\matrixname{thermomech\_dM} & 1.77 +- 0.02 & 36 & 1.41 +- 0.03 & 36 & 1.23 +- 0.02 & 39 & 1.26 +- 0.01 & 39 \\
\matrixname{torso2} & 9.43 +- 0.09 & 460 & 9.15 +- 0.09 & 453 & 8.63 +- 0.1 & 449 & 8.17 +- 0.1 & 422 \\

	\end{tabular}
\end{sidewaystable}%
\Cref{tab:performance-jacobi,tab:performance-gs} show the performance of block Jacobi and block Gauss-Seidel, respectively.
For all of the tested matrices, if double-precision converged, the iteration also converged with mixed precision.
Furthermore, in almost all cases, reducing precision improved the overall performance.
(The meaningful exceptions are BFloat16 Gauss-Seidel with \matrixname{poisson3Db} and all four combinations of iterations and 16-bit precisions with \matrixname{atmosmodj}.)
Taking the geometric mean of the per-matrix speedups, the Jacobi iteration is \(1.21\times\) faster with single-precision, \(1.27\times\) faster with half-precision, and \(1.10\times\) faster with BFloat16 compared to double precision;
the Gauss-Seidel iteration is instead \(1.12\times\) faster, \(1.08\times\) faster, and \(0.95\times\) slower, respectively.
If we exclude cases where slow-downs occur, the speedups for Jacobi increase to \(1.60\times\) and \(1.54\times\) for half-precision and BFloat16, respectively, and the speedups for Gauss-Seidel increase to \(1.25\times\) and \(1.27\times\), respectively.

There are several factors that allow mixed precision to achieve a higher speedup with Jacobi than Gauss-Seidel.
First, Gauss-Seidel often converges more quickly than Jacobi, so a higher percent of the iterations will be double-precision residuals compared to Jacobi.
Additionally, recall that each row in a Jacobi iteration is independent of all others, this allows each iteration to be handled in a single, parallel CUDA kernel.
Gauss-Seidel, on the other hand, has dependencies between rows based on a coloring, thus each color is launched with a separate CUDA kernel.
Each CUDA kernel launched in an iteration introduces some overhead for launching the kernel and the reduced utilization as the kernel finishes.
Furthermore, some colors in the Gauss-Seidel may have a small number of rows, resulting in poor utilization of the GPU, while Jacobi only has a single, high-occupancy kernel.

Note that, when slowdowns occur, they are driven by an increased number of iterations;
for all of the tests on the matrix, reducing the precision always reduces the average time per-iteration.
(The one exception is the \matrixname{Dubcova2} matrix with single-precision, which has a \SI{1}{\%} slowdown.)
To study this difference in iteration count, \cref{fig:perf-conv:jac-atmosmodj} plots the convergence of the Jacobi iteration with different precisions on the \matrixname{atmosmodj} matrix.
\begin{figure}
	\centering
	\input{results/latex-plot-acc-jac-atmosmodj.tex}
	\caption{Convergence rate of block Jacobi on \matrixname{atmosmodj} with different precisions.  Vertical lines indicate the iterations in which restarts occur for the half-precision configuration.}
	\label{fig:perf-conv:jac-atmosmodj}
\end{figure}
Interestingly, the half-precision implementation forms a scallop-like convergence pattern, with the peaks occurring at the restarts.
The convergence of BFloat16, on the other hand, just stagnates after the error improves by a fraction of a digit.
Thus, for both of these cases, increasing the restart frequency is expected to reduce the iteration count and improve performance for the 16-bit precisions.
Note that for Jacobi, the iteration matrix has a spectral radius\footnotemark{} of approximately \num{0.999521} and an \(\infty\)-norm of \num{0.99999985}.
\footnotetext{See \texttt{analysis.jl} in our code artifact for more details.}
Thus, \(\norm{\infty}{A^k}\) is always less than 1 and goes to zero as \(k\) goes to infinity in exact or double precision.
However, for half-precision, unit round-off is comparable to the gap between the spectral radius and 1 and is significantly larger than the gap between the norm and 1; and for BFloat16, the unit round-off is larger than both gaps.
Hence, rounding errors may perturb the effective spectral radius and norm to be larger than 1, which can delay or even prevent convergence.
The behavior of Gauss-Seidel on \matrixname{atmosmodj} and \matrixname{poisson3Db} appears to be similar.


\subsection{OVERFLOW}

We start by testing the effect of half-precision on OVERFLOW's SSOR linear solver~\cite{jackson2024gpu}.
OVERFLOW is a compressible CFD code developed by NASA and is based on structured overset grids.
Thus, the overarching paradigm is a Newton iteration for nonlinear equations, with a linear solve on each grid for each Newton iteration.

OVERFLOW provides many linear solver options, but we focused on the multi-colored, SSOR solver.
The linear system for each grid comes from discretizing the mesh with a 7-point finite-difference stencil.
Because it is placed within a Newton iteration, only a few digits of accuracy are needed from the linear solver~\cite{derlaga2020recent}.
OVERFLOW already converts the linear system to single-precision before running the SSOR.
So, we test the effect of reducing the linear system to BFloat16, including both the matrix and vector elements.

We tested OVERFLOW with a case from the 6th drag prediction workshop.
Specifically, we used the \verb|WB_ae2p50_medium| case, which is a steady-state simulation from an angle-of-attack sweep from aero-elastic deflections.
Flow conditions are: \(M = 0.85\), \(Re = 5 \text{ million}\), Reference temperature = 100\textdegree F, and Angle-of-attack = 2.5\textdegree.
The solver was run for \num{50000} Newton iterations with ten SSOR iterations per Newton step.

\Cref{fig:overflow-convergence} shows the convergence of the baseline, half-precision, and BFloat16 configurations on the overall mesh.
\begin{figure}
	\centering
	\input{results/overflow-results/overflow-convergence.tex}
	\caption{Effect of mixed-precision on convergence for OVERFLOW.  (Note that the convergence lines overlap almost identically.)}
	\label{fig:overflow-convergence}
\end{figure}%
The plotted residual is the sum of the per-grid residuals.
Note that the nonlinear solve converged almost identically between the configurations, and the achievable accuracy was not limited by the reduction in precision.
Heuristically, this is to be expected: OVERFLOW uses a Newton iteration with many steps, so accuracy of individual linear solves is not critical.
While the configurations achieved similar convergence, the low-precision configuration had a \SI{13.7}{\%} speedup for the overall application.
(This speedup includes the setup costs, although the Newton iteration takes \SI{99.9}{\%} of the runtime for both configurations.)

To provide a more detailed performance breakdown, we profiled the code with NVIDIA Nsight Systems on 500 Newton iterations.
\Cref{fig:overflow-breakdown} shows the time spent in different parts of the Newton solve.
(Note that, because the setup phase is negligible for the actual simulation, its time is omitted.)
\begin{figure}
	\centering
	\begin{tikzpicture}
		\begin{axis}[
			ybar stacked,
			bar width=40pt,
			enlarge x limits=0.5,
			legend style={at={(1.05,0.95)}, anchor=north west},
			legend cell align={left}, reverse legend,
			ylabel={Time (seconds)},
			ymin=0, ymax=150, ytick distance=30,
			symbolic x coords={bfloat,single},
			xtick=data, xticklabels={bfloat16, float},
			]
			\addplot[ybar,fill=color_single] plot coordinates {(bfloat,30.718) (single,30.754)}; 
			\addlegendentry{Other}
			\addplot[ybar,fill=color_half] plot coordinates {(bfloat,42.849) (single,44.347)}; 
			\addlegendentry{Build linear system}
			\addplot[ybar,fill=color_bfloat16] plot coordinates {(bfloat,37.758) (single,51.841)}; 
			\addlegendentry{SSOR kernel}
		\end{axis}
	\end{tikzpicture}
	\caption{Performance breakdown for OVERFLOW on 500 Newton iterations.}
	\label{fig:overflow-breakdown}
\end{figure}
Thus, for the above converged run, that portion will become negligible.
The SSOR kernel itself saw a \(1.37\times\) speedup.
This corresponds to a halving of the floating point data but not reducing the integer data or reducing the computational overhead.
The performance of the rest of the Newton solve was largely unaffected.

\section{Conclusions}

Using this mixed-precision approach for stationary iterative methods reduces the amount of data movement, and thus reduces the time to solution.
Theoretical analysis indicates that this approach will converge at a similar rate to full precision until the accuracy of the low-precision component is saturated, which was verified experimentally.
Furthermore, experimental results show that selectively using half-precision provided a \(1.60\times\) speedup for block Jacobi iterations and a \(1.25\times\) speedup for block Gauss-Seidel iterations.
Finally, we demonstrated that this technique can be used to accelerate OVERFLOW, a production fluid simulation code by \(1.14\times\).

Importantly, the effectiveness of this approach comes from the classic, mixed-precision iterative refinement paradigm, which has a strong history,
and the fact that the initial convergence is not affected by the reduction in precision.
This makes the timing of restarts important for performance.
However, our experimental results suggested that the ideal restart point for mixed-precision stationary methods cannot be simply determined for a given problem, which aligns with previous work on restarted iterative methods~\cite{lindquist2020improving,baker2009simple,haque2025gpu}.
Furthermore, the heuristic used in our experiments requires an estimate on the number of iterations needed for the full-precision implementation.
Thus, like previous work in mixed precision, our approach would benefit from the development of better restart strategies.

Besides compressible CFD, one of the most important modern uses of stationary iterations is as the smoother step for multigrid methods.
Note that the pre-smoother is run with an initial guess of 0.
So, based on the discussion in \cref{sec:theory}, this suggests that there will be little slowdown in the rate of convergence on the small number of smoother iterations used in practice.
Furthermore, by adding a full-precision residual computation after the prolongation step, \cref{sec:theory:ir} suggests mixed precision will also be effective for the post-smoother.
While there has been previous work on using mixed precision for multigrid methods~\cite{mccormick2021algebraic,tsai2024portable}, our approach may allow the mix of precisions to be more finely tuned, improving performance.

Finally, note that \cref{thm:main-result,thm:block-row-scaling} bound the accuracy given a normwise approximation error.
Thus, there is potential for using different compression schemes than just storing each element in a lower-precision floating-point format.
For example, storing blocks of the matrix in a block floating-point format or as a low-rank approximation may provide better trade-offs in performance and overall accuracy.

\section*{Acknowledgments}

Claude was used to generate and improve plotting scripts and to identify flaws and weaknesses in earlier drafts of the manuscript.
The authors assume responsibility for all content.

The authors would like to thank the NASA Subsonic Vehicle Technologies and Tools (SVTT) Project for supporting the implementation of this work in OVERFLOW. We would also like to thank the NASA Langley K-Cluster team for providing the compute resources to run preliminary OVERFLOW tests.

\bibliographystyle{siamplain}
\bibliography{mxpr-stationary}
\end{document}